\tikzset{
	convexset/.style = {line width = 0.5 pt, fill=lightgray, opacity=0.3},
	ext/.style = {circle, inner sep=0pt, minimum size=2pt, fill=black},
	segment/.style = {line width = 0.75 pt}
}
\theoremstyle{definition}
\newtheorem{proposition}{Proposition}
\newtheorem{example}{Example}
\newtheorem{theorem}{Theorem}
\newtheorem{remark}{Remark}
\def\int{\textrm{int}}
\def\R{\mathbb{R}}
\def\N{\hbox{{\rm I}\kern-.2em\hbox{{\rm N}}}}
\date{}
\begin{document}

\title{Douglas-Rachford splitting algorithm for projected solution of quasi  variational inequality with non-self constraint map}
\author{%M. Bianchi %\thanks{Universit{\`a} Cattolica del Sacro Cuore, Milano \tt(monica.bianchi@unicatt.it)},
%E. Miglierina %\thanks{Universit{\`a} Cattolica del Sacro Cuore, Milano \tt(enrico.miglierina@unicatt.it)},
M. Ramazannejad \thanks {Universit{\`a} Cattolica del Sacro Cuore, Milano \tt( maede.ramazannejad@unicatt.it)}}
\maketitle

\begin{abstract}
\noindent 
In this paper, we present a Douglas-Rachford splitting algorithm within a Hilbert space framework that yields a projected solution for a quasi-variational inequality. This is achieved under the conditions that the operator associated with the problem is Lipschitz continuous and strongly monotone. The proposed algorithm is based on the interaction between the resolvent operator and the reflected resolvent operator.
\end{abstract}
\noindent {\bf Key words}: Projected solutions; Quasi variational inequality; Non-self constraint map; Resolvent operator; Reflected resolvent operator.
\vskip0.1truecm \noindent
{\bf MSC}: Primary: 49J53 Secondary: 49J40; 47H05

\section{Introduction}
We present an algorithmic solution for the quasi variational inequality 
\begin{eqnarray}\label{QVIwithNonCons1}
	\text{find}\quad x_0\in P_C(z_0)\quad\text{with}\quad z_0\in \Phi(x_0)\quad\text{such that}\quad\langle T(z_0), y-z_0\rangle \geq 0,\quad \forall y\in \Phi(x_0),
\end{eqnarray}
where $C\subseteq\mathcal{H}$ is a nonempty set, $\mathcal{H}$ is a Hilbert space with inner product $\langle ., .\rangle$ and norm $\|.\|$, $P_C:\mathcal{H}\rightrightarrows C$ is the classical metric projection map, $T: \mathcal{H} \rightarrow \mathcal{H}$ is a single-valued operator, and $\Phi: C \rightrightarrows \mathcal{H}$ is a set-valued map, so that for any element $x\in C$, associates a nonempty, closed, and convex set $\Phi(x)\subseteq \mathcal{H}$.

Based on characterization of problem \eqref{QVIwithNonCons1}, it can take the following forms:
\begin{eqnarray}\label{QVIwithNonCons2}
	\text{find}\quad x_0\in P_C(z_0)\quad\text{such that}\quad 0\in (T+ N_{\Phi(x_0)})(z_0),
\end{eqnarray}
where $N_{\Phi(x_0)}$ is the normal cone. And
\begin{eqnarray}\label{QVIwithNonCons3}
	\text{find}\quad x_0\in P_C(z_0)\quad\text{such that for all ~} \xi\geq 0\quad z_0 = P_{\Phi(x_0)}(z_0 - \xi T(z_0)).
\end{eqnarray}

The point $x_0$ in the above three equivalent forms of quasi variational inequality with a non-self constraint map is called a projected solution, where $z_0$ is a solution of the Stampacchia variational inequality $S(T, \Phi(x_0))$.

In special cases if:
\begin{enumerate}
\item[(i)]  $\Phi: C \rightrightarrows C$ is a set-valued self-map then $z_0:= x_0$, and the problem simplifies to the quasi variational inequality problem 
\begin{eqnarray}\label{QVIwithSelfCons}
\text{find}\quad x_0\in \Phi(x_0)\quad\text{such that}\quad\langle T(x_0), y-x_0\rangle \geq 0,\quad \forall y\in \Phi(x_0),
\end{eqnarray}	
\item[(ii)] $\Phi$ is constant, i.e. $\Phi(x)=C$ for all $x\in C$, the problem reduces to the classical variational inequality problem
\begin{eqnarray}\label{VI}
\text{find}\quad x_0\in C\quad\text{such that}\quad\langle T(x_0), y-x_0\rangle \geq 0,\quad \forall y\in C.
\end{eqnarray}
\end{enumerate}
There is a discernible distinction in the outcomes concerning the existence of solutions for variational and quasi-variational inequalities. To illustrate, in the case where the mapping $T$ exhibits Lipschitz continuity and strong monotonicity within a closed and convex set, the variational inequality admits a single, unique solution (refer to \cite{Ans}). However, when considering QVI \eqref{QVIwithSelfCons}, the assurance of solution existence is provided by \cite[Theorem 9]{NoorOettli}.

This paper has the following structure. Section 2 provides a review of the necessary definitions. In Section 3, we clarify the assumptions that underpin the projected solution of Problem \eqref{QVIwithNonCons1}. Initially, we characterize these assumptions and explore their interconnections. Following this thorough characterization, we introduce the Douglas-Rachford splitting algorithm, specifically tailored for addressing Problem \eqref{QVIwithNonCons1}. The first subsection is devoted to this detailed presentation. Additionally, for ease of comparison, we include the specifics of the algorithm as presented in \cite{BianchiMiglierinaMaede}. This allows for a straightforward comparison between the two algorithms. The second subsection is dedicated to a numerical experiment that compares the speed and efficiency of the two aforementioned algorithms. Further details regarding the computations involved in the numerical experiment are provided in the Appendix.

\section{Preliminaries}
Let us recall a few classical definitions: 
\begin{enumerate}
\item[$\bullet$] Let $f: \mathcal{H} \rightarrow \mathbb{R}\cup \{+\infty\}$ be a proper, convex and lower semicontinuous function. The subdifferential of $f$ at $x_0\in dom(f)$ is the set
$$
\partial f(x_0):=\{v\in \mathcal{H}: f(x)\geq f(x_0) + \langle v, x-x_0\rangle, \forall x\in \mathcal{H}\}.
$$
Clearly, $\partial f:\mathcal{H} \rightrightarrows \mathcal{H}$ is a monotone operator in the sense that for all $x, y \in \mathcal{H}$ and all $v\in \partial f(x)$, $w\in \partial f(y)$ it holds
$$
\langle v-w, x-y\rangle \geq 0.
$$
According to Rockafellar's well-known classical result, $\partial f$ is in fact maximal monotone. This means that the graph of the monotone operator $\partial f$ is not properly contained in the graph of any other monotone operator on $\mathcal{H}$.
\item[$\bullet$] Let $A: \mathcal{H} \rightrightarrows \mathcal{H}$ and $I$ be the identity map on $\mathcal{H}$. 
\begin{enumerate}
\item[$\circ$] The operator $A$ is nonexpansive if one has
\begin{eqnarray*}
\|x^* - y^*\| \leq \|x-y\|,
\end{eqnarray*}
for all $x^*\in A(x),~y^*\in A(y)$.
\item[$\circ$] For $\lambda > 0$, the \textit{resolvent} of operator $A$ is the operator
\begin{eqnarray*}
J_{\lambda A} =(I+\lambda A)^{-1}.
\end{eqnarray*}
The \textit{reflected resolvent} is defined by $ R_\lambda A = 2J_{\lambda A}- I$.  
\end{enumerate}

It is evident that the domain of $J_{\lambda A}$ equals the image of $I+\lambda A$. If $A$ is maximally monotone, Minty's theorem \cite[Theorem 21.1]{BauComb} provides that the resolvent has full domain and as per Corollary 23.10 in \cite{BauComb}, $J_{\lambda A}$ and $ R_{\lambda A}$ are nonexpansive.
\item[$\bullet$] The \textit{proximal operator} $\mathrm{prox}_{\lambda f}(v): \mathcal{H} \rightrightarrows \mathcal{H}$ is defined by
$$
\mathrm{prox}_{\lambda f}(x) = \mathrm{argmin}_{v\in \mathcal{H}}(f(v) + \frac{1}{2\lambda} \|x-v\|^2),
$$
where $f: \mathcal{H} \rightarrow \mathbb{R}\cup \{+\infty\}$ is a proper, convex and lower semicontinuous function. In the special case, when $f$ is the indicator function 
	$$
I_C(x)= \begin{cases}
	0 & x\in C, \\
	+\infty & x\notin C,
\end{cases}
$$
we have
\begin{eqnarray}\label{ProxProj}
\mathrm{prox}_f(x) = \mathrm{argmin}_{v\in C}\|x-v\|^2 = P_C(x).
\end{eqnarray}
\item[$\bullet$] For any $\lambda>0$, the relation between $\mathrm{prox}_{\lambda f}$, $J_{\lambda \partial f}$ and the subdifferential operator $\partial f$ is as follows:
\begin{eqnarray}\label{RelProxRes}
\mathrm{prox}_{\lambda f}= J_{\lambda \partial f},
\end{eqnarray}
The subdifferential of the indicator function at $x$ is the \textit{normal cone}
$$
N_C(x)= \partial I_C(x):= \{p\in\mathcal{H}: \langle p, x\rangle =\sup_{y\in C}\langle p, y\rangle\}=\{p\in\mathcal{H}: \langle p, y-x\rangle\leq 0, \forall y\in C\},
$$
where $C \subset \mathcal{H}$ is a nonempty, closed, and convex set. When this fact is combined with \eqref{ProxProj} and \eqref{RelProxRes}, the result is
\begin{eqnarray}\label{ResNoCoProj}
J_{\lambda N_C} = P_C.
\end{eqnarray}

%\item[$\bullet$] The set-valued map $N_{\Phi(x_0)}: \mathcal{H} \rightrightarrows \mathcal{H}$ defined by 
%\begin{eqnarray*}
%N_{\Phi(x_0)}(z):=\{p\in \mathcal{H}; \langle p, z\rangle = \max_{y\in \Phi(x_0)}\langle p, y\rangle\},
%\end{eqnarray*}
%is called \textit{normal operator} associated with the set $\Phi(x_0)$.  
\item[$\bullet$] The single-valued map $T: \mathcal{H} \rightarrow \mathcal{H}$ is called
\begin{enumerate}
		\item[$\circ$] strongly monotone on $\mathcal{H}$, if there exists $\mu> 0$ such that
		\begin{eqnarray*}
			\langle T(x)-T(y), x - y \rangle \geq \mu \|x\ - y\|^2,\quad\text{for all}\quad x, y\in \mathcal{H}.
		\end{eqnarray*}
		\item[$\circ$]  Lipschitz continuous, if there exists $L> 0$ such that
		\begin{eqnarray*}
			\| T(x)-T(y)\| \leq L \|x-y\|,\quad\text{for all}\quad x, y\in \mathcal{H}.
		\end{eqnarray*}
	\end{enumerate}
\item[$\bullet$] The set-valued map $\Phi: C \rightrightarrows \mathcal{H}$ is Lipschitz continuous on $C\subset\mathcal{H}$ with $L>0$ if
\begin{eqnarray}
\Phi(x)\subset \Phi(y)+ L\|x-y\|\mathbb{B},\quad\forall x, y\in C,
\end{eqnarray}
where $\mathbb{B}$ denotes the closed unit ball in $\mathcal{H}$, or equivalently defined in \cite{HutKed} by
\begin{eqnarray}
d_{H}(\Phi(x), \Phi(y))\leq L \|x-y\|\quad\forall x, y\in C,
\end{eqnarray}
where $d_{H}$ is \textit{Hausdorff metric} defined as
\begin{eqnarray*}
d_{H}(\Phi(x), \Phi(y))&:=& \max\{\sup_{u\in \Phi(x)}d(u, \Phi(y)), \sup_{v\in \Phi(y)}d(v, \Phi(x))\}\\
&=&\sup_{z\in\mathcal{H}} \|d(z, \Phi(x)) -d(z, \Phi(y))\|.
\end{eqnarray*}
\end{enumerate}

%%%%%%%%%%%%%%%%%%%%%%%%%%%%%%%%%%%%%%%%%%%%%%%%%%%%%%%%%%%%%%%%%%
%%%%%%%%%%%%%%%%%%%%%%%%%%%%%%%%%%%%%%%%%%%%%%%%%%%%%%%%%%%%%%%%%%
\section{Main results}
This section outlines the assumptions that guarantee the projected solution of the problem \eqref{QVIwithNonCons1}. We then present the algorithm that ensures the generated sequence converges to the solution. Finally, we evaluate the convergence of algorithm through an example.

Let the following assumptions hold in the subsequent discussion:
\begin{enumerate}	
\item[(A1)] The set $C\subseteq\mathcal{H}$ is nonempty, closed, and convex.
\item[(A2)] For every $x\in C$, the set $\Phi(x)$ is a nonempty, closed, and convex set.
\item[(A3)] The mapping $T$ is Lipschitz continuous and strongly monotone on $\mathcal{H}$ with constants $L > 0$ and $\mu > 0$, respectively.
\item[(A4)] There exists $l>0$ such that
\begin{eqnarray}\label{contractionproj}
\|P_{\Phi(x)}(z) - P_{\Phi(y)}(z)\|\leq l\|x-y\|,\quad \text{for all}\quad x, y \in C \text{ and } z\in\mathcal{H}. 
\end{eqnarray}
\end{enumerate}
Extensive research has  considered the inequality \eqref{contractionproj}  in the topic of quasi variational inequality problems (see, for instance,\cite{AdLe,JacMij,MijJac,NestScrim,NoorOettli}).

If $\Phi(x)$ remains constant regardless of changes in $x$, we can simplify assumption (A4) by setting $l$ to zero. In this scenario, problem \eqref{QVIwithSelfCons} possesses a unique solution, provided that assumption (A3) is met. This outcome aligns with a well-established result in variational inequalities, as documented in \cite{NoorOettli}.

%The inequality \eqref{contractionproj} is frequently taken into account in the literature on the quasivariational inequality problem (see, for instance, \cite{AdLe,JacMij,MijJac,NestScrim,NoorOettli}).
%\textbf{I hope to relax the assumptions in new updates, especially on $P_{\Phi(x)}$, since (A4) is not satisfied in \cite[Example 3.12]{BianMiglRam}.}

It is well known that under assumption (A1), the projection mapping $P_C$ is nonexpansive, i.e.
\begin{eqnarray*}
\|P_C(x) - P_C(y)\| \leq \|x-y\|,\quad\text{for all}\quad x, y\in\mathcal{H}.
\end{eqnarray*}
\begin{proposition}\label{RelLipA4}
If the inequality \eqref{contractionproj} is satisfied with constant $L>0$, then the set-valued map $\Phi$ is Lipschitz continuous on $C$ with constant $L$.
\end{proposition}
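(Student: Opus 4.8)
The plan is to unwind both sides of the claimed implication and show that the one-sided components of the Hausdorff distance are each controlled by $L\|x-y\|$. By the definition of the Hausdorff metric recalled in the preliminaries,
\begin{eqnarray*}
d_{H}(\Phi(x), \Phi(y)) = \max\Big\{\sup_{u\in \Phi(x)}d(u, \Phi(y)),\ \sup_{v\in \Phi(y)}d(v, \Phi(x))\Big\},
\end{eqnarray*}
so it suffices to bound each of the two suprema by $L\|x-y\|$; by the symmetric roles of $x$ and $y$ it is enough to treat the first one.

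First I would fix an arbitrary $u\in\Phi(x)$ and rewrite its distance to the set $\Phi(y)$ as a difference of two projections. Since, by assumption (A2), the set $\Phi(y)$ is nonempty, closed, and convex, the metric projection $P_{\Phi(y)}$ is the (unique) nearest-point map, so $d(u, \Phi(y)) = \|u - P_{\Phi(y)}(u)\|$. The key observation is that the point $u$ is itself fixed by the projection onto the set that contains it: because $u\in\Phi(x)$ and $\Phi(x)$ is closed and convex, we have $P_{\Phi(x)}(u) = u$. Substituting this identity gives
\begin{eqnarray*}
d(u, \Phi(y)) = \|P_{\Phi(x)}(u) - P_{\Phi(y)}(u)\|.
\end{eqnarray*}

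At this point the hypothesis applies directly: invoking inequality \eqref{contractionproj} with the choice $z = u$ (and with the constant named $L$ in the statement) yields $d(u, \Phi(y)) \leq L\|x-y\|$. Since this bound is uniform in $u$, taking the supremum over all $u\in\Phi(x)$ gives $\sup_{u\in \Phi(x)}d(u, \Phi(y)) \leq L\|x-y\|$, and the analogous argument with the roles of $x$ and $y$ exchanged gives $\sup_{v\in \Phi(y)}d(v, \Phi(x)) \leq L\|x-y\|$. Combining the two through the $\max$ above produces $d_{H}(\Phi(x), \Phi(y)) \leq L\|x-y\|$ for all $x,y\in C$, which is precisely the Lipschitz continuity of $\Phi$ with constant $L$. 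I do not expect a serious obstacle here: the only nonroutine step is recognizing the idempotence $P_{\Phi(x)}(u)=u$ for $u\in\Phi(x)$, which turns the point-to-set distance into the difference of projections to which \eqref{contractionproj} can be applied.
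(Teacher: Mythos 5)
Your proof is correct, and it reaches the conclusion by a slightly different route than the paper. You work with the first characterization of the Hausdorff metric, $d_H(\Phi(x),\Phi(y))=\max\{\sup_{u\in\Phi(x)}d(u,\Phi(y)),\,\sup_{v\in\Phi(y)}d(v,\Phi(x))\}$, and convert each excess into a difference of projections via the identity $P_{\Phi(x)}(u)=u$ for $u\in\Phi(x)$, so that \eqref{contractionproj} is applied only at points $z$ lying in $\Phi(x)\cup\Phi(y)$. The paper instead uses the second characterization, $d_H(\Phi(x),\Phi(y))=\sup_{z\in\mathcal{H}}|d(z,\Phi(x))-d(z,\Phi(y))|$, writes $d(z,\Phi(\cdot))=\|z-P_{\Phi(\cdot)}(z)\|$, and bounds the difference by $\|P_{\Phi(x)}(z)-P_{\Phi(y)}(z)\|$ through the reverse triangle inequality, invoking \eqref{contractionproj} at an arbitrary $z\in\mathcal{H}$. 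The two arguments are of comparable length; yours has the minor advantage of showing that Lipschitz continuity of $\Phi$ already follows from \eqref{contractionproj} restricted to $z$ in the images of $\Phi$, and of connecting directly to the inclusion form $\Phi(x)\subset\Phi(y)+L\|x-y\|\mathbb{B}$ of the definition, while the paper's version is a one-line application of the $1$-Lipschitz property of distance functions. Both are valid under the standing assumption (A2), which guarantees that the projections involved are single-valued.
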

\begin{proof}
Assume that the inequality \eqref{contractionproj} is satisfied with constant $L>0$. For every  $x, y \in C$ and $z\in \mathcal{H}$, we get
\begin{eqnarray*}
|d(z, \Phi(x)) -d(z, \Phi(y))|&=&|d(z, P_{\Phi(x)}(z)) - d(z, P_{\Phi(y)}(z))| \leq d(P_{\Phi(x)}(z), P_{\Phi(y)}(z))\\
&=&\|P_{\Phi(x)}(z) - P_{\Phi(y)}(z) \| \leq L \|x-y\|,\quad \text{for all}\quad z\in\mathcal{H}. 
\end{eqnarray*}
Then 
\begin{eqnarray*}
d_{H}(\Phi(x), \Phi(y))=\sup_{z\in\mathcal{H}}|d(z, \Phi(x)) -d(z, \Phi(y))| \leq L \|x-y\|,\quad \text{for all}\quad z\in\mathcal{H}. 
\end{eqnarray*}
\end{proof}
You may see that the reverse of the previous proposition is false by looking at the following example.

\begin{example}
Consider  $C:= [0,1]\times \{0\}$ and $\Phi: C\rightrightarrows \mathbb{R}^2$ such that
\begin{equation*}
\Phi(x,0)=\{(u,v)\in \mathbb{R}^2: v= x(1-u); 0\leq u\leq 1 \}
\end{equation*}	
We can easily find that the map $\Phi$ is Lipschitz on $C$ with constant $1$. Indeed,
\begin{eqnarray*}
d_H(\Phi((x,0)), \Phi((y,0))) = \left| x-y\right| ,\quad \forall x,y\in [0,1].
\end{eqnarray*}
Hence 
\begin{eqnarray*}
d_H(\Phi((x,0)), \Phi((y,0))) \leq \|(x,0)- (y,0)\| ,\quad \forall x,y\in [0,1].
\end{eqnarray*}
But if $z=\left(1,2\right)$, we have $P_{\Phi(0,0)}(z)=(1,0)$, $P_{\Phi(1,0)}(z)=(0, 1),$ which imply that 
\begin{eqnarray*}
\|P_{\Phi((0,0))}(z)- P_{\Phi((1,0))}(z)\| = \sqrt{2}>\|(1,0)-(0,0)\|.
\end{eqnarray*}
\end{example}

%\textcolor{red}{
%\begin{example}
%	Consider  $C:= [0,1]\times \{0\}$ and $\Phi: C\rightrightarrows \mathbb{R}^2$ such that
%\begin{equation*}
%\Phi((x,0)) =\left\lbrace x\right\rbrace \times \left[ 0,\frac{1}{4}x^2-\frac{5}{4}x+1\right] 		
%\end{equation*}	
%We can easily find that the map $\Phi$ is Lipschitz on $C$ with constant $\frac{5}{4}$. Indeed,
%\begin{eqnarray*}
%d_H(\Phi((x,0)), \Phi((y,0)))\leq \frac{5}{4} \left| x-y\right| ,\quad \forall x,y\in [0,1],
%\end{eqnarray*}
%But if $z=\left(1,1\right)$, we have $P_{\Phi(0)}(z)=(0,1)$, $P_{\Phi(1)}(z)=(0, 1),$ which imply that 
%\begin{eqnarray*}
%\|P_{\Phi((0,0))}(z)- P_{\Phi((1,0))}(z)\| = \sqrt{2}> \frac{5}{4}\|(1,0)-(0,0)\|.
%\end{eqnarray*}
%\end{example}
%}

%\begin{remark}
%The proposition \ref{RelLipA4} is not true for arbitrary set $\Phi(x)$, $x\in C$. 
%77\end{remark}
The relation between inequality \eqref{contractionproj} and Lipschitz continuity of the map $\Phi$ was interestingly characterized by the authors in \cite[Proposition 5.3]{AttWet}. For convenience of the reader, we repeat the proposition here.
\begin{proposition}
Let $C$ and $D$ two nonempty, closed, and convex subsets of a Hilbert space $\mathcal{H}$. Given $x_0\in \mathcal{H}$ and
\begin{eqnarray*}
\rho = \|x_0\| + d(x_0, C) + d(x_0, D),
\end{eqnarray*}
we have the following estimation:
\begin{eqnarray}\label{relIneqLip}
\|P_C(x_0)- P_D(x_0)\|\leq \sqrt{\rho ~ d_{H, \rho}(C, D)},
\end{eqnarray}
where 
$$d_{H, \rho}(C, D):= \max\{\sup_{u\in C\cap \rho B}d(u, D), \sup_{v\in D\cap \rho B}d(C, v)\},$$ 
and $B$ is the unit ball.
\end{proposition}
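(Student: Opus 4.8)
The plan is to exploit the variational characterization of the metric projection onto a convex set together with the localization built into $d_{H,\rho}$. Write $p := P_C(x_0)$ and $q := P_D(x_0)$, and abbreviate $\eta := d_{H,\rho}(C,D)$. First I would observe that both projections land inside the ball $\rho B$: since $\|p - x_0\| = d(x_0, C)$ one gets $\|p\| \leq \|x_0\| + d(x_0, C) \leq \rho$, and analogously $\|q\| \leq \rho$. Hence $p \in C \cap \rho B$ and $q \in D \cap \rho B$, so the suprema defining $d_{H,\rho}$ apply directly to these two points, yielding $d(p, D) \leq \eta$ and $d(q, C) \leq \eta$.

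Next I would introduce the ``cross projections'' $p' := P_D(p) \in D$ and $q' := P_C(q) \in C$, which satisfy $\|p - p'\| = d(p, D) \leq \eta$ and $\|q - q'\| = d(q, C) \leq \eta$. The heart of the argument is the decomposition
$$\|p - q\|^2 = \langle p - x_0,\, p - q\rangle + \langle x_0 - q,\, p - q\rangle,$$
where each inner product is controlled separately. For the second term I would split $\langle x_0 - q, p - q\rangle = \langle x_0 - q, p - p'\rangle + \langle x_0 - q, p' - q\rangle$; the variational inequality for $q = P_D(x_0)$ tested at $p' \in D$ gives $\langle x_0 - q, p' - q\rangle \leq 0$, while Cauchy--Schwarz bounds the remaining piece by $\|x_0 - q\|\,\|p - p'\| \leq d(x_0, D)\,\eta$. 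Symmetrically, writing $\langle p - x_0, p - q\rangle = \langle p - x_0, p - q'\rangle + \langle p - x_0, q' - q\rangle$, the variational inequality for $p = P_C(x_0)$ tested at $q' \in C$ forces $\langle p - x_0, p - q'\rangle \leq 0$, and the leftover term is at most $\|p - x_0\|\,\|q' - q\| \leq d(x_0, C)\,\eta$.

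Combining the two estimates gives $\|p - q\|^2 \leq \big(d(x_0, C) + d(x_0, D)\big)\,\eta$, and since $d(x_0, C) + d(x_0, D) \leq \rho$ by the very definition of $\rho$, the claimed bound $\|P_C(x_0) - P_D(x_0)\| \leq \sqrt{\rho\, d_{H,\rho}(C,D)}$ follows upon taking square roots. I expect the only genuinely delicate point to be the bookkeeping that makes both variational inequalities usable simultaneously: one must insert the correct cross projection ($p'$ into the $D$-term and $q'$ into the $C$-term) so that each obtuse-angle inequality contributes a nonpositive term, and the localization step guaranteeing $p, q \in \rho B$ is exactly what allows the Hausdorff-type quantity $\eta$ to enter the estimate at all. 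Everything else reduces to routine applications of Cauchy--Schwarz.
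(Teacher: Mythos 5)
Your argument is correct and complete. Note that the paper itself does not prove this proposition at all: it is quoted verbatim from Attouch and Wets (Proposition 5.3 of the cited working paper) ``for convenience of the reader,'' so there is no in-paper proof to compare against. Your reconstruction --- localizing $p=P_C(x_0)$ and $q=P_D(x_0)$ inside $\rho B$ so that $d(p,D)\le d_{H,\rho}(C,D)$ and $d(q,C)\le d_{H,\rho}(C,D)$, then splitting $\|p-q\|^2=\langle p-x_0,p-q\rangle+\langle x_0-q,p-q\rangle$ and inserting the cross projections $P_D(p)$ and $P_C(q)$ so that each obtuse-angle inequality kills one term and Cauchy--Schwarz bounds the other by $d(x_0,C)\,\eta$ and $d(x_0,D)\,\eta$ respectively --- is exactly the standard Attouch--Wets argument, and every step checks out, including the final observation that $d(x_0,C)+d(x_0,D)\le\rho$.
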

Based on \eqref{relIneqLip}, if for every $x, y\in \mathcal{H}$, we have $\sqrt{\rho ~ d_{H, \rho}(\Phi(x), \Phi(y))}\leq d_{H}(\Phi(x), \Phi(y))$ and $\Phi$ is Lipschitz continuous with constant $L$, then the inequality \eqref{contractionproj} is satisfied.

%\begin{theorem}(see \cite[Theorem 9]{NoorOettli}) 

%Then \eqref{QVIwithSelfCons} has a unique solution.
%\end{theorem}

%]

%Inspired by the well-known fixed point formulation for the solution of a quasi variational inequality with self-map constraint we have the following lemma:

%\begin{lemma}
%Let $\Phi: C \rightrightarrows \mathcal{H}$ be such that $\Phi(x)$ is a closed convex valued set, for all $x\in C$. Then $z_0\in \Phi(x_0)$ is a solution of QVI \eqref{QVIwithNonCons1} if and only if for any $\gamma\geq 0$ the equality $z_0 = P_{\Phi(x_0)}(z_0 - \gamma T(z_0))$ satisfies.
%\end{lemma}

%\begin{remark}\cite{BauComb} 
%Both \textit{resolvent} and \textit{reflected resolvent} of maximal monotone operator $A$ defined respectively as follows
%\begin{eqnarray*}
%J_A= (I+A)^{-1}\quad\text{and}\quad R_A = 2J_A- I,
%\end{eqnarray*}
%are nonexpansive.
%\end{remark}

In the following proposition, we can determine the assumptions under which the reflected resolvent operator can gain the property of contraction.
\begin{proposition}\cite{Gis}
Let $T: \mathcal{H} \rightarrow \mathcal{H}$ be strongly monotone with constant $\mu$ and L-Lipschitz continuous with $L\geq \mu$. If $\xi>0$, then $R_{\xi T}$ is Lipschitz continuous with modulus 
\begin{eqnarray}
L_{\xi T}:=\sqrt{1-\frac{4\xi\mu}{1+2\xi\mu+\xi^2 L^2}}.
\end{eqnarray}
The value $\xi^*:=\frac{1}{L}$ minimizes $L_{\xi T}$ with respect to $\xi $ and 
\begin{eqnarray*}
L_{\xi^* T}:=\sqrt{\frac{\gamma -1}{\gamma + 1}},
\end{eqnarray*}
where $\gamma:=\frac{L}{\mu}\geq 1$.
\end{proposition}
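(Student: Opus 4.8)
The plan is to reduce the statement to a two-point estimate through the resolvent. First I would note that $J_{\xi T}$ is single-valued with full domain: since $T$ is monotone and $\xi>0$, the map $I+\xi T$ is strongly monotone with constant $1+\xi\mu>0$, hence a bijection of $\mathcal{H}$ (this is exactly where Minty's theorem, recalled in the preliminaries, applies), so the resolvent is well defined. Fix $x,y\in\mathcal{H}$, put $u:=J_{\xi T}(x)$ and $v:=J_{\xi T}(y)$, and use the defining relations $x=u+\xi T(u)$, $y=v+\xi T(v)$ together with $R_{\xi T}=2J_{\xi T}-I$ to obtain
\begin{eqnarray*}
R_{\xi T}(x)-R_{\xi T}(y)&=&2(u-v)-(x-y)=(u-v)-\xi\bigl(T(u)-T(v)\bigr),\\
x-y&=&(u-v)+\xi\bigl(T(u)-T(v)\bigr).
\end{eqnarray*}

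Next I would expand both squared norms. Writing $a:=\|u-v\|^2$, $b:=\langle T(u)-T(v),u-v\rangle$ and $c:=\|T(u)-T(v)\|^2$, the two displays give $\|R_{\xi T}(x)-R_{\xi T}(y)\|^2=a-2\xi b+\xi^2 c$ and $\|x-y\|^2=a+2\xi b+\xi^2 c$. Strong monotonicity and Lipschitz continuity of $T$ furnish the two scalar inequalities $b\ge\mu a$ and $c\le L^2 a$. Since both denominators below are strictly positive (the second equals $\|x-y\|^2$, nonzero whenever $x\neq y$), the whole claim reduces to the single inequality
\[
\frac{a-2\xi b+\xi^2 c}{a+2\xi b+\xi^2 c}\le\frac{1-2\xi\mu+\xi^2 L^2}{1+2\xi\mu+\xi^2 L^2}=L_{\xi T}^2,
\]
where the displayed equality is a one-line rearrangement of the definition of $L_{\xi T}$.

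The core computation, and the step where care is needed, is this central inequality. I would clear denominators (legitimate by positivity) and expand; after cancellation of the symmetric terms the difference of the two sides collapses to $4\xi\bigl(\mu a+\mu\xi^2 c-b(1+\xi^2 L^2)\bigr)$, so it suffices to prove $\mu a+\mu\xi^2 c\le b(1+\xi^2 L^2)$. This follows by chaining the two hypotheses through the common quantity $\mu a(1+\xi^2 L^2)$: the bound $c\le L^2 a$ gives $\mu a+\mu\xi^2 c\le\mu a(1+\xi^2 L^2)$, while $b\ge\mu a$ gives $\mu a(1+\xi^2 L^2)\le b(1+\xi^2 L^2)$. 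The two hypotheses are used in mutually compatible directions, so no Cauchy--Schwarz relation between $b$ and $c$ is needed, and the hypothesis $L\ge\mu$ (which is in any case automatic, being forced by strong monotonicity together with Lipschitz continuity) is not actually required for the estimate. Taking square roots yields $\|R_{\xi T}(x)-R_{\xi T}(y)\|\le L_{\xi T}\|x-y\|$.

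Finally, for the optimal step size I would rewrite $L_{\xi T}^2=1-\tfrac{4\mu}{\xi^{-1}+2\mu+\xi L^2}$ and observe that minimizing $L_{\xi T}$ over $\xi>0$ is equivalent to minimizing $\xi^{-1}+\xi L^2$. By the AM--GM inequality $\xi^{-1}+\xi L^2\ge 2L$ with equality precisely at $\xi=1/L$, so $\xi^*=1/L$. Substituting gives $L_{\xi^* T}^2=1-\tfrac{2\mu}{L+\mu}=\tfrac{L-\mu}{L+\mu}$, and dividing numerator and denominator by $\mu$ rewrites this as $\tfrac{\gamma-1}{\gamma+1}$ with $\gamma=L/\mu$, as claimed.
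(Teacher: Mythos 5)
Your argument is correct and complete. Note that the paper itself offers no proof of this proposition --- it is imported verbatim from Giselsson \cite{Gis} --- so there is no internal proof to compare against; what you have written is a self-contained elementary derivation of the cited result. I checked the one step where the computation could go astray: after clearing the two positive denominators $\|x-y\|^2=a+2\xi b+\xi^2c$ and $1+2\xi\mu+\xi^2L^2$, the difference of the two sides does collapse to $4\xi\bigl(\mu a+\mu\xi^2c-b(1+\xi^2L^2)\bigr)$, and your chain through the intermediate quantity $\mu a(1+\xi^2L^2)$ uses $b\ge\mu a$ and $c\le L^2a$ in mutually compatible directions, so the sign comes out right. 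Your side remarks are also accurate: $L\ge\mu$ is forced by Cauchy--Schwarz applied to the strong monotonicity inequality, and while the contraction estimate itself never invokes it, it is what guarantees $1-2\xi\mu+\xi^2L^2\ge(1-\xi L)^2\ge0$, so that $L_{\xi T}$ is a real number. The AM--GM minimization of $\xi^{-1}+\xi L^2$ and the substitution $\xi^*=1/L$ yielding $L_{\xi^*T}^2=(\gamma-1)/(\gamma+1)$ both check out. The only difference from the source is methodological: Giselsson derives the modulus inside a broader operator-theoretic framework (averagedness and cocoercivity properties of the reflected resolvent) and additionally proves that the rate is tight, whereas your two-point expansion proves exactly the Lipschitz estimate the paper needs, without the sharpness claim --- which the proposition does not assert anyway.
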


Inspired by Lemmas 2 and 3 as well as Theorem 2 in \cite{AdLe}, the framework for the projected solution of \eqref{QVIwithNonCons1} as follows.

%The following results are inspired by Lemmas 2 and 3, and Theorem 2 in \cite{AdLe}.
\begin{proposition}\label{problemsolutionlemma1}
	Let $\xi>0$. If $(z^*, y^*, x^*)$ satisfies 
	\begin{eqnarray}\label{equationsofsolution}
		\begin{cases}
			z^*= J_{\xi N_{\Phi(x^*)}}(y^*)= P_{\Phi(x^*)}(y^*)\\   
			y^* = R_{\xi T}(2z^*-y^*) \\
			x^* = P_C(z^*)
		\end{cases}
	\end{eqnarray}
	then $x^*$ is a projected solution of problem \eqref{QVIwithNonCons1}.
\end{proposition}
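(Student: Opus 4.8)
The plan is to show that the triple $(z^*, y^*, x^*)$ forces the pair $(z^*, x^*)$ to satisfy the inclusion form \eqref{QVIwithNonCons2}, and then to read off \eqref{QVIwithNonCons1} from it. The work splits into two essentially independent decodings of the system \eqref{equationsofsolution} — one for the reflected-resolvent line and one for the resolvent/projection line — which are afterwards combined through a single cancellation.

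First I would unwind the second equation. Writing $R_{\xi T} = 2J_{\xi T} - I$ and setting $w := 2z^* - y^*$, the identity $y^* = R_{\xi T}(w) = 2 J_{\xi T}(w) - w$ collapses to $z^* = J_{\xi T}(w) = J_{\xi T}(2z^* - y^*)$, since $y^* + w = 2z^*$. Because $T$ is single-valued, applying $I + \xi T$ to both sides turns this into an equality, $2z^* - y^* = z^* + \xi T(z^*)$, i.e. $y^* - z^* = -\,\xi T(z^*)$.

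Next I would unwind the first equation. From $z^* = J_{\xi N_{\Phi(x^*)}}(y^*) = (I + \xi N_{\Phi(x^*)})^{-1}(y^*)$ one gets $y^* - z^* \in \xi N_{\Phi(x^*)}(z^*)$. Combining with the relation from the previous paragraph yields $-\,\xi T(z^*) \in \xi N_{\Phi(x^*)}(z^*)$; dividing by $\xi > 0$, which is legitimate as the normal cone is a cone, gives $0 \in T(z^*) + N_{\Phi(x^*)}(z^*)$. This is exactly \eqref{QVIwithNonCons2} with $z_0 = z^*$ and $x_0 = x^*$. To finish, I would observe that $z^* = P_{\Phi(x^*)}(y^*) \in \Phi(x^*)$ by (A2), that $x^* = P_C(z^*)$ by the third equation so that $x^* \in P_C(z^*)$, and that expanding $-T(z^*) \in N_{\Phi(x^*)}(z^*)$ via the definition of the normal cone recovers $\langle T(z^*), y - z^* \rangle \geq 0$ for all $y \in \Phi(x^*)$, which is precisely the statement of \eqref{QVIwithNonCons1}.

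The computations are short, and the only step demanding genuine care is the reflected-resolvent passage, where one must track the argument $2z^* - y^*$ accurately and justify inverting $I + \xi T$; single-valuedness of $T$ is what makes this an equality rather than merely an inclusion. Everything else is bookkeeping with the resolvent and normal-cone identities recalled in Section~2, in particular \eqref{ResNoCoProj}.
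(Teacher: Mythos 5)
Your argument is correct and follows essentially the same route as the paper's proof: unwind $y^*=R_{\xi T}(2z^*-y^*)$ to $z^*-y^*=\xi T(z^*)$, unwind $z^*=J_{\xi N_{\Phi(x^*)}}(y^*)$ to $y^*-z^*\in\xi N_{\Phi(x^*)}(z^*)$, and combine with $x^*=P_C(z^*)$ to land on \eqref{QVIwithNonCons2}. The only difference is that you spell out the final translation from the inclusion $0\in(T+N_{\Phi(x^*)})(z^*)$ back to the inequality form \eqref{QVIwithNonCons1}, which the paper leaves implicit.
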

\begin{proof}
	We have
	\begin{eqnarray}\label{solutionSTV1}
		y^*= R_{\xi T}(2z^*-y^*)&\Leftrightarrow& y^*= 2 J_{\xi T}(2z^*-y^*)-2z^*+y^*\nonumber\\
		&\Leftrightarrow& y^* = 2(I+\xi T)^{-1}(2z^*-y^*)-2z^*+y^*\\
		&\Leftrightarrow&z^*-y^*= \xi T(z^*)\nonumber
	\end{eqnarray}
	On the other hand,
	\begin{eqnarray}\label{solutionSTV2}
		z^*= J_{\xi N_{\Phi(x^*)}}(y^*)&\Leftrightarrow& z^* = (I+\xi N_{\Phi(x^*)})^{-1}(y^*)\nonumber\\
		&\Leftrightarrow& y^*-z^*\in \xi N_{\Phi(x^*)}(z^*)
	\end{eqnarray}
	The third line of \eqref{equationsofsolution} along with \eqref{solutionSTV1} and \eqref{solutionSTV2} by \eqref{QVIwithNonCons2} show that $x^*$ is a solution of the problem \eqref{QVIwithNonCons1}.
\end{proof}

Let $\rho:\mathcal{H}\times\mathcal{H}\times\mathcal{H}\rightarrow\mathcal{H}\times\mathcal{H}\times\mathcal{H}$ defined as:
$$
\rho(w) = (J_{\xi N_{\Phi(x)}}(y), R_{\xi T}(2J_{\xi N_{\Phi(x)}}(y)-y), P_C(z)),\quad w=(z, y, x)\in \mathcal{H}\times\mathcal{H}\times\mathcal{H},
$$
where $\mathcal{H}\times\mathcal{H}\times\mathcal{H}$ is equipped with the new norm
$$
\|w\|_1 = \alpha\|z\|+ \|y\|+\beta \|x\|
$$
for suitable positive $\alpha$ and $\beta$. 
\begin{proposition}\label{problemsolutionlemma2}
Let $\xi:=\frac{1}{L}$ and $\alpha$, $\beta$ such that
\begin{eqnarray}\label{ParametersAssums}
2l \le \beta \le \alpha L_{\xi T}, \, \,   \alpha + 2 L_{\xi T} < 1
\end{eqnarray}
where $L_{\xi T} = \sqrt {\dfrac{\gamma -1}{\gamma +1}}$ with $1\leq\gamma = \dfrac L {\mu}< \dfrac 5 {3}$.
	%\begin{eqnarray*}
	%l<\frac{1}{2}(1-k-\sqrt{\frac{\gamma -1}{\gamma+1}}),
	%\end{eqnarray*}
	%with $\gamma=\frac{L}{\mu}\geq 1$.
	Then the function $\rho$ is $\delta-$contractive where
	\begin{eqnarray*}
		\delta:= \alpha + 2 L_{\xi T}.
	\end{eqnarray*}
	Consequently, there exists a unique triplex $(z^*, y^*,x^*)$ such that
	$$
	z^*= J_{\xi N_{\Phi(x^*)}}(y^*),\quad y^* = R_{\xi T}(2z^*-y^*),\quad and\quad x^* = P_C(z^*).
	$$
	In particular, \eqref{QVIwithNonCons1} has a unique projected solution $x^*$.
\end{proposition}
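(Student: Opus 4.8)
The plan is to prove $\delta$-contractivity of $\rho$ by hand and then close the statement with the Banach fixed point theorem and Proposition~\ref{problemsolutionlemma1}. Fix two points $w=(z,y,x)$ and $w'=(z',y',x')$ in $\mathcal{H}\times\mathcal{H}\times C$ (the natural domain of $\rho$, since $\Phi$ is only defined on $C$; note that closedness of $C$ makes this set complete and that $\rho$ maps it into itself, its third coordinate lying in $C$). Write $\rho(w)=(\tilde z,\tilde y,\tilde x)$ and $\rho(w')=(\tilde z',\tilde y',\tilde x')$ with $\tilde z=P_{\Phi(x)}(y)$, $\tilde y=R_{\xi T}(2\tilde z-y)$, $\tilde x=P_C(z)$. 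The aim is to bound $\|\rho(w)-\rho(w')\|_1=\alpha\|\tilde z-\tilde z'\|+\|\tilde y-\tilde y'\|+\beta\|\tilde x-\tilde x'\|$ by $\delta\,\|w-w'\|_1$, so I would estimate each coordinate separately and reassemble in the weighted norm.

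For the first coordinate, inserting $P_{\Phi(x)}(y')$ and using that $P_{\Phi(x)}$ is nonexpansive together with (A4) gives $\|\tilde z-\tilde z'\|\le\|y-y'\|+l\|x-x'\|$. For the third coordinate, nonexpansiveness of $P_C$ gives $\|\tilde x-\tilde x'\|\le\|z-z'\|$. The decisive step — which I expect to be the main obstacle — is the second coordinate. A naive triangle bound $\|\tilde y-\tilde y'\|\le L_{\xi T}(2\|\tilde z-\tilde z'\|+\|y-y'\|)$ would feed the whole estimate of $\|\tilde z-\tilde z'\|$ into $\tilde y$ and produce a coefficient $3L_{\xi T}$ on $\|y-y'\|$, which is too large for the coefficient matching below. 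The remedy is to recognize that $2\tilde z-y=(2P_{\Phi(x)}-I)(y)=R_{N_{\Phi(x)}}(y)$ is exactly a reflected resolvent of a normal cone, hence nonexpansive by \eqref{ResNoCoProj} and Corollary~23.10 of \cite{BauComb}. Inserting $(2P_{\Phi(x)}-I)(y')$, the $y$-dependence then contributes only $\|y-y'\|$, while the set-perturbation difference $(2P_{\Phi(x)}-I)(y')-(2P_{\Phi(x')}-I)(y')=2\bigl(P_{\Phi(x)}(y')-P_{\Phi(x')}(y')\bigr)$ contributes at most $2l\|x-x'\|$ by (A4). Composing with the $L_{\xi T}$-Lipschitz map $R_{\xi T}$ (whose modulus is given by the proposition quoted from \cite{Gis}, applicable since $\xi=1/L$ and $\gamma\ge1$) yields $\|\tilde y-\tilde y'\|\le L_{\xi T}\bigl(\|y-y'\|+2l\|x-x'\|\bigr)$.

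Assembling the three bounds into the weighted norm, I would obtain
$$\|\rho(w)-\rho(w')\|_1\le \beta\|z-z'\|+(\alpha+L_{\xi T})\|y-y'\|+l(\alpha+2L_{\xi T})\|x-x'\|,$$
and then compare coefficient-by-coefficient against $\delta\|w-w'\|_1$ with $\delta=\alpha+2L_{\xi T}$. The $\|z-z'\|$-term requires $\beta\le\delta\alpha$, which holds since $\beta\le\alpha L_{\xi T}\le\alpha(\alpha+2L_{\xi T})$; the $\|y-y'\|$-term requires $\alpha+L_{\xi T}\le\delta$, which is immediate; and the $\|x-x'\|$-term requires $l\le\beta$, which follows from $2l\le\beta$. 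This establishes $\delta$-contractivity, with $\delta<1$ guaranteed by hypothesis — the constraint $\gamma<5/3$ being precisely what forces $L_{\xi T}<\tfrac12$, so that a positive $\alpha$ with $\alpha+2L_{\xi T}<1$ can be chosen at all.

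Finally, the Banach fixed point theorem applied to the $\delta$-contraction $\rho$ on the complete space $\mathcal{H}\times\mathcal{H}\times C$ produces a unique fixed point $(z^*,y^*,x^*)$, which is exactly a solution of system \eqref{equationsofsolution}; by Proposition~\ref{problemsolutionlemma1} its last coordinate $x^*$ is then the projected solution of \eqref{QVIwithNonCons1}, and uniqueness of the fixed point forces uniqueness of this projected solution.
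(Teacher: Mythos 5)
Your proof is correct and follows essentially the same route as the paper: the same three coordinate estimates (nonexpansiveness of $P_{\Phi(x)}$ and of the reflected resolvent $2P_{\Phi(x)}-I$, assumption (A4), the $L_{\xi T}$-Lipschitz bound for $R_{\xi T}$, and nonexpansiveness of $P_C$), assembled in the weighted norm $\|\cdot\|_1$ and closed by the Banach fixed point theorem together with Proposition~\ref{problemsolutionlemma1}. The only differences are cosmetic: you defer the hypotheses $2l\le\beta\le\alpha L_{\xi T}$ to a final coefficient-by-coefficient comparison rather than absorbing terms into $\|w_1-w_2\|_1$ along the way (which in fact shows only $l\le\beta$ is needed for the $x$-term), and you are slightly more careful than the paper in restricting $\rho$ to the complete set $\mathcal{H}\times\mathcal{H}\times C$ on which it is actually well defined.
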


\begin{proof}
	Let $w_1=(z_1, y_1, x_1)$ and $w_2=(z_2, y_2, x_2)$ be given in $\mathcal{H}\times\mathcal{H}\times\mathcal{H}$. Using the contraction property \eqref{contractionproj},  the nonexpansiveness of the resolvent, and \eqref{ResNoCoProj} we obtain
	\begin{eqnarray*}
		\|J_{\xi N_{\Phi(x_1)}}(y_1)-J_{\xi N_{\Phi(x_2)}}(y_2)\|&\leq& \|J_{\xi N_{\Phi(x_1)}}(y_1)-J_{\xi N_{\Phi(x_2)}}(y_1)\|+\|J_{\xi N_{\Phi(x_2)}}(y_1)-J_{\xi N_{\Phi(x_2)}}(y_2)\|\\
		&\leq& l\|x_1-x_2\|+\|y_1-y_2\|\\
		&\leq& \beta \|x_1-x_2\|+\|y_1-y_2\|\\
		& \leq & \|w_1-w_2\|_1.
	\end{eqnarray*}
	On the other hand, using \eqref{contractionproj}, the nonexpansiveness of the reflected resolvent, and \eqref{ResNoCoProj}
	one obtains
	\begin{eqnarray*}
		&&\|(2J_{\xi N_{\Phi(x_1)}}(y_1)-y_1)-(2J_{\xi N_{\Phi(x_2)}}(y_2)-y_2)\|\\
		&\leq&\|(2J_{\xi N_{\Phi(x_1)}}(y_1)-y_1)-(2J_{\xi N_{\Phi(x_1)}}(y_2)-y_2)\|+2\|J_{\xi N_{\Phi(x_1)}}(y_2)-J_{\xi N_{\Phi(x_2)}}(y_2)\|\\
		&=& \|R_{\xi N_{\Phi(x_1)}}(y_1)-R_{\xi N_{\Phi(x_1)}}(y_2)\|+2\|J_{\xi N_{\Phi(x_1)}}(y_2)-J_{\xi N_{\Phi(x_2)}}(y_2)\|\\
		&\leq&\|y_1-y_2\|+2l\|x_1-x_2\|\\
		&\leq& \beta \|x_1-x_2\|+\|y_1-y_2\|\\
		&\leq& \|w_1-w_2\|_1.
	\end{eqnarray*}
Therefore,
\begin{eqnarray}\label{DifRho}
\|\rho(w_1)-\rho(w_2)\|_1 & = &\alpha \|J_{\xi N_{\Phi(x_1)}}(y_1)-J_{\xi N_{\Phi(x_2)}}(y_2)\|+ \nonumber\\
		& + & \|R_{\xi T}(2J_{\xi N_{\Phi(x_1)}}(y_1)-y_1)-R_{\xi T}(2J_{\xi N_{\Phi(x_2)}}(y_2)-y_2)\| + \nonumber\\
		&+& \beta \|P_C(z_1)-P_C(z_2)\| \nonumber\\
		&\leq &  (\alpha + L_{\xi T}) \|w_1-w_2\|_1 + \beta \|z_1-z_2\| \\
		&\leq & (\alpha + L_{\xi T}) \|w_1-w_2\|_1 + \alpha L_{\xi T} \|z_1-z_2\| \nonumber\\
		& \leq & (\alpha + 2L_{\xi T}) \|w_1-w_2\|_1  = \delta\|w_1-w_2\|_1\nonumber
\end{eqnarray}
Then, one obtains the contraction of $\rho$, and utilizing the Banach fixed-point theorem, there is a unique triplex that satisfies $\rho(z^*,y^*,x^*)=(z^*,y^*,x^*)$. We reached the conclusion using Proposition \ref{problemsolutionlemma1}.
\end{proof}

\begin{remark}
	A possible but not unique choice for $\alpha$ and $\beta$ should be
	
	$$\beta = 2l, \,\, \alpha =  L_{\xi T}$$
	
	where $L_{\xi T} < \frac  13 $ (this last condition is satisfied if $\gamma < \frac 5 4, $ i.e. $L< \frac 5 4 \mu$) and $l < \frac 12  L_{\xi T}^2$.  
\end{remark}

\subsection{Algorithmic details}
This subsection introduces a Douglas-Rachford splitting algorithm, referred to as Algorithm 1. We also present the specifics of the definition-based algorithm mentioned in \cite{BianchiMiglierinaMaede} regarding the original problem's form, labeled as \eqref{QVIwithNonCons3}, under the heading Algorithm 2. At the end, we will have the convergence result of the sequence obtained by Algorithm 1.
%For the benefit of readers, the specifics of definition-based algorithm in \cite{BianchiMiglierinaMaede} regarding the original problem's form, labeled as \eqref{QVIwithNonCons3}, can be checked under the heading Algorithm 2. 

\begin{figure}[H]
\begin{minipage}{\textwidth}
\begin{algorithm}[H]
\caption{Douglas–Rachford Splitting Algorithm for Projected Solution of Quasi-Variational Inequality with Non-Self Constraints}\label{SecondAlg-Douglas–Rachford}
\begin{algorithmic}
		\STATE \textbf{Initialize:}
		Choose the starting points $x_0\in C$, $y_0\in \mathcal{H}$, and the parameter $\xi = \frac{1}{L} >0$.
		\FOR{k= 0, 1, 2, ...}
		\STATE\textbf{Step 1.}
		\STATE $z_{k+1}= P_{\Phi(x_{k})}(y_{k})$
		\STATE $y_{k+1} = 2(I+\xi T)^{-1}(2z_{k+1}-y_k) - (2z_{k+1}-y_k)$ 
		\STATE $x_{k+1} = P_C(z_{k+1})$
		\STATE\textbf{Step 2.}
		\IF{$y_{k+1} == y_k$ \textbf{and} $x_{k+1} == x_k$}
		\STATE \textbf{break}
		\ELSE
		\STATE $x_k= x_{k+1}$
		\STATE $y_k= y_{k+1}$
		\ENDIF
		\ENDFOR
		\STATE Finalize and return result
\end{algorithmic}
\end{algorithm}
\end{minipage}
\vspace{1em}
\begin{minipage}{\textwidth}
\begin{algorithm}[H]
\caption{Definition–Based Algorithm for Projected Solution of Quasi-Variational Inequality with Non-Self Constraints}
	\label{SecondAlg-Douglas–Rachford}
\begin{algorithmic}
		\STATE \textbf{Initialize:}
		Choose the starting points $x_0\in C$, $y_0\in \Phi(C)$, and the parameter $\gamma>0$.
		\FOR{k= 0, 1, 2, ...}
		\STATE\textbf{Step 1.}
		\STATE $y_{k+1}= P_{\Phi(x_{k})}(y_{k} - \gamma T(y_k))$
		\STATE\textbf{Step 2.}
		\IF{$y_{k+1} == y_k$}
		\STATE $x_{k+1} = P_C(y_{k+1})$
		\IF{$x_{k+1} == x_k$}
		\STATE \textbf{break}
		\ELSE
		\STATE $x_k= x_{k+1}$
		\STATE $y_k= y_{k+1}$
		\STATE{Go to Step 3}
		\ENDIF
		\ELSE
		\STATE $y_k= y_{k+1}$
		\STATE{Go to Step 1}
		\ENDIF
		\STATE\textbf{Step 3.}
		\IF{$y_k$ not in $\Phi(x_k)$}
		\STATE{Select a random point from $\Phi(x_k)$ labeled $y_k$ and go to Step 1}
		\ENDIF
		\ENDFOR
		\STATE Finalize and return result
\end{algorithmic}
\end{algorithm}
\end{minipage}
\end{figure}
\quad\\
Then we have the following convergence result.
\begin{theorem}
	Suppose that (A1)-(A4) are valid. Then by considering 
	\begin{eqnarray*}
		\alpha <(1-2\sqrt{\frac{\gamma -1}{\gamma+1}}),\quad\text{where}\quad \gamma:=\frac{L}{\mu},
	\end{eqnarray*}
	the problem \eqref{QVIwithNonCons1} has a unique projected solution. Additionally, the sequences $(z_k)$, $(y_k)$, and $(x_k)$ generated by Algorithm 1 converge strongly to $z^*$, $y^*$, and $x^*$, respectively where $(z^*, y^*, x^*)$ satisfies \eqref{equationsofsolution}. We also have the following estimation
	\begin{eqnarray*}
\alpha\|z_{k+1}-z^*\|+\|y_k-y^*\|+\beta\|x_k-x^*\|\leq \delta^k(\alpha\|z_1-z^*\|+\|y_0-y^*\|+\beta\|x_0-x^*\|),
	\end{eqnarray*}
	where
$
		\delta:=\alpha+2\sqrt{\frac{\gamma -1}{\gamma+1}}< 1.
$
\end{theorem}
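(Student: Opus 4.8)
The plan is to separate the claim into two parts: the existence and uniqueness of the projected solution, which is essentially already established, and the geometric convergence estimate, which requires a fresh estimate carried out along the iterates.

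First I would note that the stated hypothesis $\alpha<1-2\sqrt{(\gamma-1)/(\gamma+1)}$ is exactly the requirement $\delta:=\alpha+2L_{\xi T}<1$ of Proposition~\ref{problemsolutionlemma2}, once $\beta$ is fixed so that \eqref{ParametersAssums} holds (which is possible precisely when $\gamma<5/3$, i.e. $L_{\xi T}<1/2$). Hence Proposition~\ref{problemsolutionlemma2} already produces the unique triple $(z^*,y^*,x^*)$ solving \eqref{equationsofsolution}, and Proposition~\ref{problemsolutionlemma1} identifies $x^*$ as the unique projected solution of \eqref{QVIwithNonCons1}. This settles the first assertion, so the remaining work is purely the rate estimate.

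For the estimate I would set $a_k:=\|z_{k+1}-z^*\|$, $b_k:=\|y_k-y^*\|$, $c_k:=\|x_k-x^*\|$, so that the claimed bound is $E_k:=\alpha a_k+b_k+\beta c_k\le\delta^k E_0$, and it suffices to prove the one-step inequality $E_{k+1}\le\delta E_k$. Using the updates $z_{k+1}=P_{\Phi(x_k)}(y_k)$, $x_{k+1}=P_C(z_{k+1})$ together with $z^*=P_{\Phi(x^*)}(y^*)$, assumption \eqref{contractionproj}, and nonexpansiveness of $P_C$, I would derive the elementary bounds
$$a_k\le l\,c_k+b_k,\qquad c_{k+1}=\|P_C(z_{k+1})-P_C(z^*)\|\le a_k,$$
together with the $y$-component estimate
$$b_{k+1}=\|R_{\xi T}(2z_{k+1}-y_k)-R_{\xi T}(2z^*-y^*)\|\le L_{\xi T}\big(b_k+2l\,c_k\big).$$
The crucial point is this last inequality, and I expect it to be the main obstacle. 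One cannot simply quote the contraction of $\rho$ from Proposition~\ref{problemsolutionlemma2}, because Algorithm~1 forms $x_{k+1}=P_C(z_{k+1})$ from the \emph{freshly computed} $z_{k+1}$ rather than from $z_k$, so the iteration is not literally the Picard iteration of $\rho$. Estimating a generic triple would leave a factor $2\|z_1-z_2\|$ inside the reflected resolvent and destroy the contraction. What saves the argument is that along the sequence $2z_{k+1}-y_k=R_{\xi N_{\Phi(x_k)}}(y_k)$ and $2z^*-y^*=R_{\xi N_{\Phi(x^*)}}(y^*)$ by \eqref{ResNoCoProj}; applying the $L_{\xi T}$-Lipschitz continuity of $R_{\xi T}$ and inserting $R_{\xi N_{\Phi(x_k)}}(y^*)$, the nonexpansiveness of $R_{\xi N_{\Phi(x_k)}}$ controls the $y$-part by $b_k$, while the normal-cone term contributes $2\|P_{\Phi(x_k)}(y^*)-P_{\Phi(x^*)}(y^*)\|\le2l\,c_k$ via \eqref{contractionproj}.

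Finally I would assemble the pieces. Applying $a_{k+1}\le l\,c_{k+1}+b_{k+1}$ and substituting the bounds for $b_{k+1}$ and $c_{k+1}$ gives
$$E_{k+1}\le(\alpha l+\beta)\,a_k+(\alpha+1)L_{\xi T}\,b_k+2l(\alpha+1)L_{\xi T}\,c_k.$$
It then remains to verify, term by term, that each coefficient is dominated by the matching coefficient of $\delta E_k=\delta\alpha\,a_k+\delta\,b_k+\delta\beta\,c_k$, i.e. $\alpha l+\beta\le\delta\alpha$, $(\alpha+1)L_{\xi T}\le\delta$, and $2l(\alpha+1)L_{\xi T}\le\delta\beta$. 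Under \eqref{ParametersAssums}, namely $2l\le\beta\le\alpha L_{\xi T}$ and $\alpha+2L_{\xi T}<1$, these are elementary: the middle one reduces to $L_{\xi T}(\alpha-1)\le\alpha$, true since $\alpha<1$; the third follows from it using $2l\le\beta$; and the first follows from $\beta\le\alpha L_{\xi T}$, $l\le\beta/2$, and $L_{\xi T}<1$. This yields $E_{k+1}\le\delta E_k$, hence $E_k\le\delta^k E_0$, and since $\delta<1$ with $\alpha,\beta>0$ each of $a_k,b_k,c_k\to0$, giving the strong convergence $z_k\to z^*$, $y_k\to y^*$, $x_k\to x^*$.
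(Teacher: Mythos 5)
Your proof is correct, and it rests on the same basic device as the paper's --- a one-step contraction of the weighted error $E_k=\alpha\|z_{k+1}-z^*\|+\|y_k-y^*\|+\beta\|x_k-x^*\|$ --- but it is considerably more careful than what the paper actually writes. The paper's entire proof consists of defining this same quantity (called $w_k$ there) and asserting $w_k\le\delta w_{k-1}$ ``based on \eqref{DifRho}''. As you correctly observe, \eqref{DifRho} cannot be quoted verbatim: Algorithm~1 computes $x_{k+1}=P_C(z_{k+1})$ from the freshly computed $z_{k+1}$, whereas the third component of $\rho$ applies $P_C$ to the first component of its \emph{input}, so the iteration is a Gauss--Seidel variant of the Picard iteration of $\rho$ rather than that iteration itself; spotting and handling this mismatch is a genuine repair of the paper's argument. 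Your componentwise bounds $a_k\le l\,c_k+b_k$, $c_{k+1}\le a_k$, and $b_{k+1}\le L_{\xi T}(b_k+2l\,c_k)$ (the last via $2z_{k+1}-y_k=R_{\xi N_{\Phi(x_k)}}(y_k)$ and the insertion of $R_{\xi N_{\Phi(x_k)}}(y^*)$) are exactly the estimates used inside the proof of Proposition~\ref{problemsolutionlemma2}, now applied along the trajectory against the fixed point, and I verified that your three coefficient comparisons $\alpha l+\beta\le\delta\alpha$, $(\alpha+1)L_{\xi T}\le\delta$, $2l(\alpha+1)L_{\xi T}\le\delta\beta$ all hold under \eqref{ParametersAssums}, so $E_{k+1}\le\delta E_k$ follows. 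The one caveat --- which you share with the paper's own statement --- is that an admissible $\beta$ with $2l\le\beta\le\alpha L_{\xi T}$ exists only if $l$ is small enough relative to $\alpha L_{\xi T}$, so your parenthetical ``possible precisely when $\gamma<5/3$'' is slightly optimistic: the condition also constrains $l$, not just $\gamma$.
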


\begin{proof}
	Let consider $w_k=\alpha\|z_{k+1}-z^*\|+\|y_k-y^*\|+\beta\|x_k-x^*\|,~ k\geq 1$. Based on \eqref{DifRho}, we have $w_k\leq\delta w_{k-1}$, and this leads to the conclusion.
\end{proof}

\subsection{Numerical experiment}
This part highlights the efficiency and speed of Douglas-Rachford splitting algorithm compared to the Algorithm 2 through a numerical example. 

%%%%%%%%%%%%%%%%%%%%%%%%%%%%%%%%%%%%%%%%%%%%%%%%%%%%%%%%%%%%%%%%%%
\begin{example}
	\label{ex:Aussel}Let $C=\{x=(x^1,x^2) \in \R^2: 0\le x^1 \le 1, 0\le x^2\le 1, x^1+x^2 \ge 1 \}$,
	$$
	\Phi(x)= Q +\frac 1 {2^6} x,
	$$
	where $Q=\{(x^1,x^2) \in \R^2: 0\le x^1 \le 1, 0\le x^2\le 1 \}$ is the unit square. And let
	\begin{eqnarray*}
		Tx = \begin{pmatrix} 0.22 & 0  \\
			0 & 0.25 \\
		\end{pmatrix}x.
	\end{eqnarray*}
It is easy to check that $T$ is $0.25$-Lipschitz continuous and $0.22$-strongly monotone and the set $C$ and the set-valued map $\Phi$ satisfy the assumptions (A1) and (A2), respectively. In the Appendix, the fulfillment of condition (A4) is rigorously assessed.

	 Considering 
	$$
	l=\frac{1}{2^6},\quad L= 0.25,\quad \mu=0.22,\quad and\quad \xi=4,
	$$
	the inequalities \eqref{ParametersAssums} hold. The unique point that meet \eqref{QVIwithNonCons3} is $(\frac{1}{2}, \frac{1}{2})$, since there exists $(z_0^1, z_0^2)=(\frac{1}{2^7}, \frac{1}{2^7})$ such that $(z_0^1, z_0^2)= P_{\Phi(\frac{1}{2}, \frac{1}{2})} (0.12  z_0^1, 0)$, and then $(\frac{1}{2}, \frac{1}{2})=P_C(\frac{1}{2^7}, \frac{1}{2^7})$. Hence we can say the point $(\frac{1}{2}, \frac{1}{2})$ is the unique projected solution.
	
	The table \ref{table:algorithm_comparison} compares Algorithms 1 and 2 in terms of the number of iterations and CPU time required to achieve an accuracy, starting from the same initial points. We set the accuracy at \(10^{-8}\), signifying that approximate projected solutions \(x^*\) and approximate Stampacchia variational inequality solutions  \(z^*\) are accepted as close to exact solutions within \(10^{-8}\). All experiments applied on
	Intel(R) Core(TM) i5-1135G7 CPU, 8GB RAM and Windows 10 Enterprise operating system. 
	%For a fair comparison, both the parameter \(\xi\) in Algorithm 1 and the parameter \(\gamma\) in Algorithm 2 are set to \(1/L = 4\).

	\begin{table}[h!]
		\centering
		\begin{tabular}{l|l|c|c}
			\hline
			\textbf{Initial Points} & \textbf{Metric}          & \textbf{Algorithm 1 } & \textbf{Algorithm 2 } \\ \hline
			\multirow{3}{*}{$x_0=(0,1),$ $y_0=(0,1)$} & CPU Time (seconds)    & 0.0                        & 0.015625                       \\ \cline{2-4} 
			& Iterations to Converge & 8                          & 20                          \\ \hline
			\multirow{3}{*}{$x_0=(1,0),$ $y_0=(1,1)$} & CPU Time (seconds)    & 0.0                       & 0.015625                      \\ \cline{2-4} 
			& Iterations to Converge & 8                          & 22                         \\ \hline
			\multirow{3}{*}{$x_0=(0.5,0.75),$ $y_0=(0.5,1)$} & CPU Time (seconds)    & 0.0                        & 0.015625                        \\ \cline{2-4} 
			& Iterations to Converge & 8                          & 15                          \\ \hline
		\end{tabular}
		\caption{Comparison of Algorithm 1 and Algorithm 2 with the Same Initial Points but Different Initializations}
		\label{table:algorithm_comparison}
	\end{table}
	%In Algorithm 2, it is sometimes necessary to use random points from \(\phi(x)\). The speed of Algorithm 2 may vary depending on the selected points, either decreasing or increasing accordingly.
	
\end{example}

%%%%%%%%%%%%%%%%%%%%%%%%%%%%%%%%%%%%%%%%%%%%%%%%%%%%%%%%%%%%%%%%%%%%%%%%%%%%%%%%%%%%%%%%%%%%%%%%%%%%%%%%%%%%%%%%%%%%%%%%%%
\section{Conclusions}	
%In this paper, we suggest a Douglas-Rachford splitting algorithm for quasi variational inequality problems with non-self constraint map in real Hilbert spaces and then derive convergence results. The numerical experiment demonstrates that the proposed method performs better than the definition-based approach of the projected solutions.

The contributions of this work are significant both theoretically and practically. Theoretically, we have extended the applicability of the Douglas-Rachford splitting algorithm to a broader class of problems by addressing quasi-variational inequalities with non-self constraint maps, which has not been extensively studied in existing literature. 

Practically, our numerical experiment emphasizes the efficiency and reliability of the proposed method. By comparing it with the definition-based approach, we have shown that our algorithm can achieve superior performance. This highlights the algorithm's utility in real-world scenarios, where the constraints and conditions can often be complex and non-standard.

Future work may explore further refinements and extensions of the algorithm, such as adapting it to other types of variational inequalities and constraint maps, or investigating its performance in different Hilbert space settings. Additionally, examining the impact of various parameter choices on the algorithm's performance could yield valuable guidelines for practical implementations.
%%%%%%%%%%%%%%%%%%%%%%%%%%%%%%%%%%%%%%%%%%%%%%%%%%%%%%%%%%%%%%%%%%%%%%%%%%%%%%%%%%%%%%%%%%%%%%%%%%%%%%%%%%%%%%%%%%%%%%%%%%%%%%%%%%%%
%\textbf{Data sharing is not applicable to this article as no datasets were generated or analysed during the current study.}

\section{Appendix}
Following, the computation verifying the satisfaction of condition (A4) in Example 2 is provided.

For every $x=(x^1, x^2), y=(y^1,y^2) \in C$ there are three situations:

\begin{flalign*}
	\begin{cases}
		(1)\quad (x^1, x^2) = (y^1, y^2). \\
		(2)\quad  x^1 < y^1, \text{ and}~ y^2< x^2.\\
		(3)\quad  x^1 < y^1, \text{ and}~ x^2< y^2.
	\end{cases}&&
\end{flalign*}
In situation (1), for every $z=(z^1,z^2)\in\R^2$, $P_{\Phi(x)}(z)= P_{\Phi(y)}(z)$ that satisfies (A4). We will now discuss how condition (A4) is satisfied in situation (2). 

\begin{flalign*}
	P_{\Phi(x)}(z) = 
	\begin{cases}
		1.~ z , & \text{if } \frac{y^1}{2^6}\leq z^1\leq 1 + \frac{x^1}{2^6}, \frac{x^2}{2^6}\leq z^2\leq 1+ \frac{y^2}{2^6} \\
		\begin{aligned}
			&2.~(1 + \frac{x^1}{2^6}, 1 + \frac{x^2}{2^6}),
		\end{aligned}
		& \begin{aligned}
			&\text{if } 1+ \frac{y^1}{2^6} \leq z^1,  z^2\geq 1 + \frac{x^2}{2^6}, 
		\end{aligned}\\
		\begin{aligned}
			&3.~(1 + \frac{x^1}{2^6}, 1 + \frac{x^2}{2^6}),
		\end{aligned}
		& \begin{aligned}
			&\text{if } 1+ \frac{x^1}{2^6} \leq z^1\leq  1+ \frac{y^1}{2^6},  z^2\geq 1 + \frac{x^2}{2^6}, 
		\end{aligned}\\
		\begin{aligned}
			&4.~(1 + \frac{x^1}{2^6}, z^2),
		\end{aligned}
		& \begin{aligned}
			&\text{if }   z^1\geq 1 + \frac{y^1}{2^6}, 1+ \frac{y^2}{2^6} \leq z^2\leq  1+ \frac{x^2}{2^6},
		\end{aligned}\\
		\begin{aligned}
			&5.~(1 + \frac{x^1}{2^6}, z^2),
		\end{aligned}
		& \begin{aligned}
			&\text{if }   1+ \frac{x^1}{2^6} \leq z^1\leq  1+ \frac{y^1}{2^6}, 1+ \frac{y^2}{2^6} \leq z^2\leq  1+ \frac{x^2}{2^6},
		\end{aligned}\\
		\begin{aligned}
			&6.~(z^1, 1+ \frac{x^2}{2^6}),
		\end{aligned}
		& \begin{aligned}
			&\text{if }   \frac{y^1}{2^6} \leq z^1\leq  1+ \frac{x^1}{2^6}, z^2\geq  1+ \frac{x^2}{2^6},
		\end{aligned}\\
		\begin{aligned}
			&7.~ z,
		\end{aligned}
		& \begin{aligned}
			&\text{if }   \frac{y^1}{2^6} \leq z^1\leq  1+ \frac{x^1}{2^6}, 1+ \frac{y^2}{2^6}\leq z^2\leq  1+ \frac{x^2}{2^6},
		\end{aligned}\\
		\begin{aligned}
			&8.~z,
		\end{aligned}
		& \begin{aligned}
			&\text{if }   \frac{x^1}{2^6} \leq z^1\leq \frac{y^1}{2^6}, 1+ \frac{y^2}{2^6}\leq z^2\leq  1+ \frac{x^2}{2^6},
		\end{aligned}\\
		\begin{aligned}
			&9.~(z^1,1+ \frac{x^2}{2^6}),
		\end{aligned}
		& \begin{aligned}
			&\text{if }   \frac{x^1}{2^6} \leq z^1\leq \frac{y^1}{2^6},  z^2\geq  1+ \frac{x^2}{2^6},
		\end{aligned}\\
		\begin{aligned}
			&10.~(\frac{x^1}{2^6}, 1+ \frac{x^2}{2^6}),
		\end{aligned}
		& \begin{aligned}
			&\text{if }   \frac{x^1}{2^6} \geq z^1,  z^2\geq  1+ \frac{x^2}{2^6},
		\end{aligned}\\
		\begin{aligned}
			&11.~(\frac{x^1}{2^6}, z^2),
		\end{aligned}
		& \begin{aligned}
			\text{if }   \frac{x^1}{2^6} \geq z^1,  1+ \frac{y^2}{2^6}\leq z^2\leq  1+ \frac{x^2}{2^6},
		\end{aligned}\\
		\begin{aligned}
			&12.~(\frac{x^1}{2^6}, z^2),
		\end{aligned}
		& \begin{aligned}
			\text{if }   \frac{x^1}{2^6} \geq z^1,  \frac{x^2}{2^6}\leq z^2\leq  1+ \frac{y^2}{2^6},
		\end{aligned}\\
		\begin{aligned}
			&13.~z,
		\end{aligned}
		& \begin{aligned}
			\text{if }   \frac{x^1}{2^6} \leq z^1\leq \frac{y^1}{2^6},  \frac{x^2}{2^6}\leq z^2\leq  1+ \frac{y^2}{2^6},
		\end{aligned}\\
		\begin{aligned}
			&14.~(\frac{x^1}{2^6}, \frac{x^2}{2^6}),
		\end{aligned}
		& \begin{aligned}
			\text{if }   \frac{x^1}{2^6} \geq z^1,  \frac{y^2}{2^6}\leq z^2\leq  \frac{x^2}{2^6},
		\end{aligned}\\
		\begin{aligned}
			&15.~(z^1, \frac{x^2}{2^6}),
		\end{aligned}
		& \begin{aligned}
			\text{if }   \frac{x^1}{2^6} \leq z^1\leq\frac{y^1}{2^6},  \frac{y^2}{2^6}\leq z^2\leq\frac{x^2}{2^6},
		\end{aligned}\\
		\begin{aligned}
			&16.~(z^1,\frac{x^2}{2^6}),
		\end{aligned}
		& \begin{aligned}
			\text{if }   \frac{y^1}{2^6} \leq z^1\leq 1+\frac{x^1}{2^6}, \frac{y^2}{2^6}\leq z^2\leq \frac{x^2}{2^6},
		\end{aligned}\\
		\begin{aligned}
			&17.~(1+\frac{x^1}{2^6},1+\frac{x^2}{2^6}),
		\end{aligned}
		& \begin{aligned}
			\text{if }   1+\frac{x^1}{2^6} \leq z^1\leq 1+\frac{y^1}{2^6}, \frac{y^2}{2^6}\leq z^2\leq \frac{x^2}{2^6},
		\end{aligned}\\
		\begin{aligned}
			&18.~(\frac{x^1}{2^6}, \frac{x^2}{2^6}),
		\end{aligned}
		& \begin{aligned}
			\text{if }   z^1\leq\frac{x^1}{2^6},   z^2\leq \frac{y^2}{2^6},
		\end{aligned}\\
		\begin{aligned}
			&19.~(z^1,\frac{x^2}{2^6}),
		\end{aligned}
		& \begin{aligned}
			\text{if }   \frac{x^1}{2^6} \leq z^1\leq \frac{y^1}{2^6},  z^2\leq \frac{y^2}{2^6},
		\end{aligned}\\
		\begin{aligned}
			&20.~(z^1,\frac{x^2}{2^6}),
		\end{aligned}
		& \begin{aligned}
			\text{if }   \frac{y^1}{2^6} \leq z^1\leq 1+\frac{x^1}{2^6},  z^2\leq \frac{y^2}{2^6},
		\end{aligned}\\
		\begin{aligned}
			&21.~(1+\frac{x^1}{2^6}, \frac{x^2}{2^6}),
		\end{aligned}
		& \begin{aligned}
			\text{if }   1+\frac{x^1}{2^6} \leq z^1\leq 1+\frac{y^1}{2^6},   z^2\leq \frac{y^2}{2^6},
		\end{aligned}\\
		\begin{aligned}
			&22.~(1+\frac{x^1}{2^6}, \frac{x^2}{2^6}),
		\end{aligned}
		& \begin{aligned}
			\text{if }   1+\frac{y^1}{2^6} \leq z^1,  z^2\leq \frac{y^2}{2^6},
		\end{aligned}\\
		\begin{aligned}
			&23.~(1+\frac{x^1}{2^6}, \frac{x^2}{2^6}),
		\end{aligned}
		& \begin{aligned}
			\text{if }   1+\frac{y^1}{2^6} \leq z^1,  \frac{y^2}{2^6}\leq z^2\leq \frac{x^2}{2^6},
		\end{aligned}\\
		\begin{aligned}
			&24.~(1+\frac{x^1}{2^6}, z^2),
		\end{aligned}
		& \begin{aligned}
			\text{if }   1+\frac{x^1}{2^6} \leq z^1\leq  1+\frac{y^1}{2^6},  \frac{x^2}{2^6}\leq z^2\leq 1+\frac{y^2}{2^6},
		\end{aligned}\\
		\begin{aligned}
			&25.~(1+\frac{x^1}{2^6}, z^2),
		\end{aligned}
		& \begin{aligned}
			\text{if }   1+\frac{y^1}{2^6} \leq z^1,  \frac{x^2}{2^6}\leq z^2\leq 1+\frac{y^2}{2^6}.
		\end{aligned}\\
	\end{cases}&& 
\end{flalign*}

\begin{flalign*}
	P_{\Phi(y)}(z) = 
	\begin{cases}
		1.~ z , & \text{if } \frac{y^1}{2^6}\leq z^1\leq 1 + \frac{x^1}{2^6}, \frac{x^2}{2^6}\leq z^2\leq 1+ \frac{y^2}{2^6}, \\
		\begin{aligned}
			&2.~(1 + \frac{y^1}{2^6}, 1 + \frac{y^2}{2^6}),
		\end{aligned}
		& \begin{aligned}
			&\text{if } 1+ \frac{y^1}{2^6} \leq z^1,  z^2\geq 1 + \frac{x^2}{2^6}, 
		\end{aligned}\\
		\begin{aligned}
			&3.~(z^1, 1 + \frac{y^2}{2^6}),
		\end{aligned}
		& \begin{aligned}
			&\text{if } 1+ \frac{x^1}{2^6} \leq z^1\leq  1+ \frac{y^1}{2^6},  z^2\geq 1 + \frac{x^2}{2^6}, 
		\end{aligned}\\
		\begin{aligned}
			&4.~(1 + \frac{y^1}{2^6}, 1 + \frac{y^2}{2^6}),
		\end{aligned}
		& \begin{aligned}
			&\text{if }   z^1\geq 1 + \frac{y^1}{2^6}, 1+ \frac{y^2}{2^6} \leq z^2\leq  1+ \frac{x^2}{2^6},
		\end{aligned}\\
		\begin{aligned}
			&5.~(z^1, 1 + \frac{y^2}{2^6}),
		\end{aligned}
		& \begin{aligned}
			&\text{if }   1+ \frac{x^1}{2^6} \leq z^1\leq  1+ \frac{y^1}{2^6}, 1+ \frac{y^2}{2^6} \leq z^2\leq  1+ \frac{x^2}{2^6},
		\end{aligned}\\
		\begin{aligned}
			&6.~(z^1, 1+ \frac{y^2}{2^6}),
		\end{aligned}
		& \begin{aligned}
			&\text{if }   \frac{y^1}{2^6} \leq z^1\leq  1+ \frac{x^1}{2^6}, z^2\geq  1+ \frac{x^2}{2^6},
		\end{aligned}\\
		\begin{aligned}
			&7.~(z^1, 1+ \frac{y^2}{2^6}),
		\end{aligned}
		& \begin{aligned}
			&\text{if }   \frac{y^1}{2^6} \leq z^1\leq  1+ \frac{x^1}{2^6}, 1+ \frac{y^2}{2^6}\leq z^2\leq  1+ \frac{x^2}{2^6},
		\end{aligned}\\
		\begin{aligned}
			&8.~(\frac{y^1}{2^6}, 1+ \frac{y^2}{2^6}),
		\end{aligned}
		& \begin{aligned}
			&\text{if }   \frac{x^1}{2^6} \leq z^1\leq \frac{y^1}{2^6}, 1+ \frac{y^2}{2^6}\leq z^2\leq  1+ \frac{x^2}{2^6},
		\end{aligned}\\
		\begin{aligned}
			&9.~(\frac{y^1}{2^6}, 1+ \frac{y^2}{2^6}),
		\end{aligned}
		& \begin{aligned}
			&\text{if }   \frac{x^1}{2^6} \leq z^1\leq \frac{y^1}{2^6},  z^2\geq  1+ \frac{x^2}{2^6},
		\end{aligned}\\
		\begin{aligned}
			&10.~(\frac{y^1}{2^6}, 1+ \frac{y^2}{2^6}),
		\end{aligned}
		& \begin{aligned}
			&\text{if }   \frac{x^1}{2^6} \geq z^1,  z^2\geq  1+ \frac{x^2}{2^6},
		\end{aligned}\\
		\begin{aligned}
			&11.~(\frac{y^1}{2^6}, 1+ \frac{y^2}{2^6}),
		\end{aligned}
		& \begin{aligned}
			\text{if }   \frac{x^1}{2^6} \geq z^1,  1+ \frac{y^2}{2^6}\leq z^2\leq  1+ \frac{x^2}{2^6},
		\end{aligned}\\
		\begin{aligned}
			&12.~(\frac{y^1}{2^6}, z^2),
		\end{aligned}
		& \begin{aligned}
			\text{if }   \frac{x^1}{2^6} \geq z^1,  \frac{x^2}{2^6}\leq z^2\leq  1+ \frac{y^2}{2^6},
		\end{aligned}\\
		\begin{aligned}
			&13.~(\frac{y^1}{2^6}, z^2),
		\end{aligned}
		& \begin{aligned}
			\text{if }   \frac{x^1}{2^6} \leq z^1\leq \frac{y^1}{2^6},  \frac{x^2}{2^6}\leq z^2\leq  1+ \frac{y^2}{2^6},
		\end{aligned}\\
		\begin{aligned}
			&14.~(\frac{y^1}{2^6}, z^2),
		\end{aligned}
		& \begin{aligned}
			\text{if }   \frac{x^1}{2^6} \geq z^1,  \frac{y^2}{2^6}\leq z^2\leq  \frac{x^2}{2^6},
		\end{aligned}\\
		\begin{aligned}
			&15.~(\frac{y^1}{2^6}, z^2),
		\end{aligned}
		& \begin{aligned}
			\text{if }   \frac{x^1}{2^6} \leq z^1\leq\frac{y^1}{2^6},  \frac{y^2}{2^6}\leq z^2\leq\frac{x^2}{2^6},
		\end{aligned}\\
		\begin{aligned}
			&16.~z,
		\end{aligned}
		& \begin{aligned}
			\text{if }   \frac{y^1}{2^6} \leq z^1\leq 1+\frac{x^1}{2^6},   \frac{y^2}{2^6}\leq z^2\leq \frac{x^2}{2^6},
		\end{aligned}\\
		\begin{aligned}
			&17.~z,
		\end{aligned}
		& \begin{aligned}
			\text{if }   1+\frac{x^1}{2^6} \leq z^1\leq 1+\frac{y^1}{2^6}, \frac{y^2}{2^6}\leq z^2\leq \frac{x^2}{2^6},
		\end{aligned}\\
		\begin{aligned}
			&18.~(\frac{y^1}{2^6},\frac{y^2}{2^6}),
		\end{aligned}
		& \begin{aligned}
			\text{if }    z^1\leq \frac{x^1}{2^6},  z^2\leq \frac{y^2}{2^6},
		\end{aligned}\\
		\begin{aligned}
			&19.~(\frac{y^1}{2^6},\frac{y^2}{2^6}),
		\end{aligned}
		& \begin{aligned}
			\text{if }   \frac{x^1}{2^6} \leq z^1\leq \frac{y^1}{2^6},  z^2\leq \frac{y^2}{2^6},
		\end{aligned}\\
		\begin{aligned}
			&20.~(z^1,\frac{y^1}{2^6}),
		\end{aligned}
		& \begin{aligned}
			\text{if }   \frac{y^1}{2^6} \leq z^1\leq 1+\frac{x^1}{2^6}, z^2\leq \frac{y^2}{2^6},
		\end{aligned}\\
		\begin{aligned}
			&21.~(z^1, \frac{y^2}{2^6}),
		\end{aligned}
		& \begin{aligned}
			\text{if }   1+\frac{x^1}{2^6} \leq z^1 \leq 1+\frac{y^1}{2^6},   z^2\leq \frac{y^2}{2^6},
		\end{aligned}\\
		\begin{aligned}
			&22.~(1+\frac{y^1}{2^6}, \frac{y^2}{2^6}),
		\end{aligned}
		& \begin{aligned}
			\text{if }   1+\frac{y^1}{2^6} \leq z^1,  z^2\leq \frac{y^2}{2^6},
		\end{aligned}\\
		\begin{aligned}
			&23.~(1+\frac{y^1}{2^6}, z^2),
		\end{aligned}
		& \begin{aligned}
			\text{if }   1+\frac{y^1}{2^6} \leq z^1,  \frac{y^2}{2^6}\leq z^2\leq 1+\frac{x^2}{2^6},
		\end{aligned}\\
		\begin{aligned}
			&24.~z,
		\end{aligned}
		& \begin{aligned}
			\text{if }   1+\frac{x^1}{2^6} \leq z^1 \leq 1+\frac{y^1}{2^6},  \frac{x^2}{2^6}\leq z^2\leq 1+\frac{y^2}{2^6},
		\end{aligned}\\
		\begin{aligned}
			&25.~(1+\frac{y^1}{2^6}, z^2),
		\end{aligned}
		& \begin{aligned}
			\text{if }   1+\frac{y^1}{2^6} \leq z^1,  \frac{x^2}{2^6}\leq z^2\leq 1+\frac{y^2}{2^6}.
		\end{aligned}\\
	\end{cases} &&		
\end{flalign*}\\
%\begin{figure}
%\includegraphics[scale=2]{FIG}
%\caption{The figure illustrates case 2 within the framework of situation 2.}
%\end{figure}
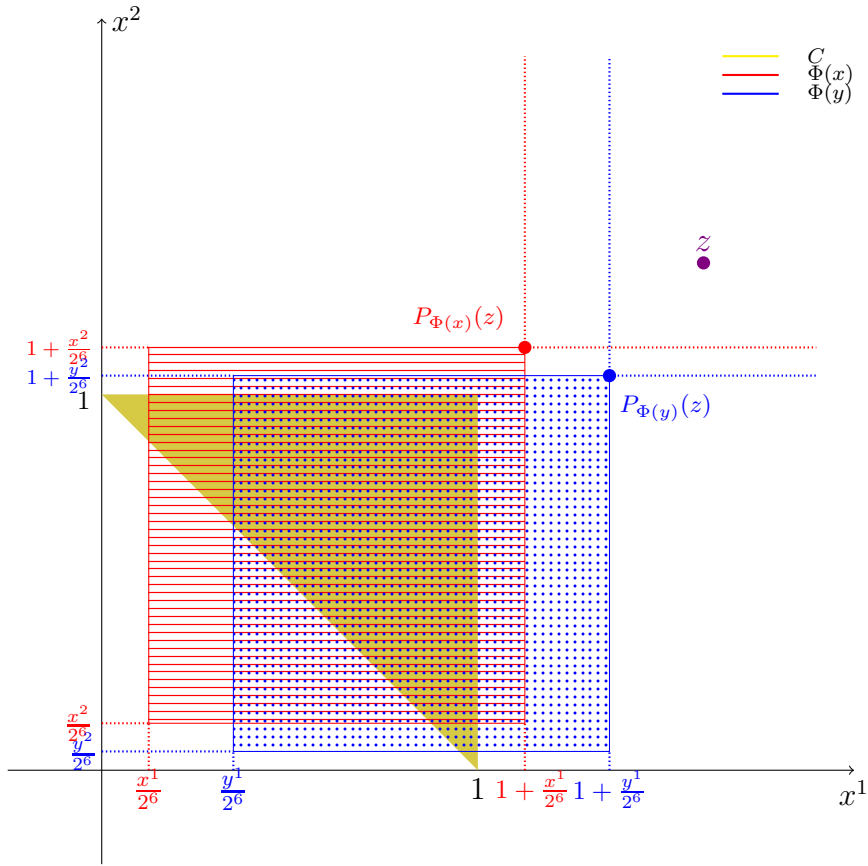
\begin{figure}
	\begin{tikzpicture}[scale=2.5]
		
		\draw[->](-0.5,0)--(4,0)node[below]{\fontsize{12}{4}{\( x^1 \)}};
		\draw[->](0,-0.5)--(0,4)node[right]{\fontsize{12}{4}{\( x^2 \)}};
		
		%%%%%%%%%%%%%%%%%%%%%%%%%%%%%%%%%%%%%%%%%%%%%%%%%%%%%%%%%%%%%%%%%%%
		
		\fill [fill=yellow!80!black]
		(2,0) 
		-- (2,2)  
		-- (0,2);
		%\node at (1.4,1.4) {\small{$C$}};
		\node at (2, -0.1){\fontsize{12}{5}\selectfont{1}};
		\node at (-0.095, 1.97){\fontsize{10}{5}\selectfont{1}};
		
		%%%%%%%%%%%%%%%%%%%%%%%%%%%%%%%%%%%%%%%%%%%%%%%%%%%%%%%%%%%%%%%%%%%
		
		\draw[red] (0.25,0.25) rectangle (2.25,2.25);
		\fill[pattern=horizontal lines, pattern color=red] (0.25,0.25) rectangle (2.25,2.25);
		%\node at (1.0, 0.5) {\fontsize{6}{7}\selectfont \textcolor{red}{$\Phi(x^1,x^2)$}};
		
		\draw[dashed, dash pattern=on 0.55pt off 1pt, red, thick] (0.25,0)--(0.25, 0.25);
		\node at (0.25, -0.1) {\fontsize{12}{4}\selectfont \textcolor{red}{$\frac{x^1}{2^6}$}};
		
		\draw[dashed, dash pattern=on 0.55pt off 1pt, red, thick] (2.25,0)--(2.25, 0.25);
		\node at (2.29, -0.1) {\fontsize{10}{4}\selectfont \textcolor{red}{$1+\frac{x^1}{2^6}$}};
		
		\draw[dashed, dash pattern=on 0.55pt off 1pt, red, thick] (0, 0.25)--(0.25, 0.25);
		\node at (-0.15, 0.25) {\fontsize{10}{4} \textcolor{red}{$\frac{x^2}{2^6}$}};
		
		\draw[dashed, dash pattern=on 0.55pt off 1pt, red, thick] (0, 2.25)--(0.25, 2.25);
		\node at (-0.25, 2.25) {\fontsize{8}{4} \textcolor{red}{$1+\frac{x^2}{2^6}$}};
		
		%%%%%%%%%%%%%%%%%%%%%%%%%%%%%%%%%%%%%%%%%%%%%%%%%%%%%%%%%%%%%%%%%%%%
		
		\draw[blue] (0.7,0.1) rectangle (2.7,2.1);
		\fill[pattern=dots, pattern color=blue] (0.7,0.1) rectangle (2.7,2.1);
		%\node at (1.5, 1.9) {\fontsize{6}{7}\selectfont \textcolor{blue}{$\Phi(y^1,y^2)$}};
		
		\draw[dashed, dash pattern=on 0.55pt off 1pt, blue, thick] (0.7,0)--(0.7, 0.1);
		\node at (0.7, -0.1) {\fontsize{12}{4}\selectfont \textcolor{blue}{$\frac{y^1}{2^6}$}};
		
		\draw[dashed, dash pattern=on 0.55pt off 1pt, blue, thick] (2.7,0)--(2.7, 0.1);
		\node at (2.7, -0.1) {\fontsize{10}{4}\selectfont \textcolor{blue}{$1+\frac{y^1}{2^6}$}};
		
		\draw[dashed, dash pattern=on 0.55pt off 1pt, blue, thick] (0, 0.1)--(0.7, 0.1);
		\node at (-0.1, 0.1) {\fontsize{10}{4}\selectfont \textcolor{blue}{$\frac{y^2}{2^6}$}};
		
		\draw[dashed, dash pattern=on 0.55pt off 1pt, blue, thick] (0, 2.1)--(0.7, 2.1);
		\node at (-0.25, 2.1) {\fontsize{8}{4} \textcolor{blue}{$1+\frac{y^2}{2^6}$}};
		
		%%%%%%%%%%%%%%%%%%%%%%%%%%%%%%%%%%%%%%%%%%%%%%%%%%%%%%%%%%%%%%%%%%%%%
		
		\draw[dashed, dash pattern=on 0.55pt off 1pt, red, thick] (2.25, 2.25)--(2.25, 3.8);
		\draw[dashed, dash pattern=on 0.55pt off 1pt, red, thick] (2.25, 2.25)--(3.8, 2.25);
		
		\draw[dashed, dash pattern=on 0.55pt off 1pt, blue, thick] (2.7,2.1)--(2.7, 3.8);
		\draw[dashed, dash pattern=on 0.55pt off 1pt, blue, thick] (2.7,2.1)--(3.8, 2.1);
		
		%%%%%%%%%%%%%%%%%%%%%%%%%%%%%%%%%%%%%%%%%%%%%%%%%%%%%%%%%%%%%%%%%%%%
		
		\fill[violet] (3.2,2.7) circle (1pt);
		\node at (3.2, 2.8) {\fontsize{13}{4}\selectfont \textcolor{violet}{$z$}};
		
		%%%%%%%%%%%%%%%%%%%%%%%%%%%%%%%%%%%%%%%%%%%%%%%%%%%%%%%%%%%%%%%%%%%%
		
		\fill[red] (2.25,2.25) circle (1pt);
		\node at (1.9, 2.4) {\fontsize{9}{4}\selectfont \textcolor{red}{$P_{\Phi(x)}(z)$}};
		
		\fill[blue] (2.7,2.1) circle (1pt);
		\node at (3, 1.93) {\fontsize{9}{4}\selectfont \textcolor{blue}{$P_{\Phi(y)}(z)$}};
		%%%%%%%%%%%%%%%%%%%%%%%%%%%%%%%%%%%%%%%%%%%%%%%%%%%%%%%%%%%%%%%%%%%
		%GUIDANCE
		
		% Legend for the first set
		\draw[yellow, thick, pattern=horizontal lines] (3.3,3.8) -- (3.6,3.8);
		\node[right] at (3.7,3.8) {\fontsize{8}{5}\selectfont \textcolor{black}{$C$}};
		
		% Legend for the second set
		\draw[red, thick, pattern=horizontal lines] (3.3,3.7) -- (3.6,3.7);
		\node[right] at (3.7,3.7) {\fontsize{8}{5}\selectfont \textcolor{black}{$\Phi(x)$}};
		
		% Legend for the third set
		\draw[blue, thick, pattern=dots] (3.3,3.6) -- (3.6,3.6);
		\node[right] at (3.7,3.6) {\fontsize{8}{5}\selectfont \textcolor{black}{$\Phi(y)$}};

	\end{tikzpicture}
	\caption{The figure illustrates case 2 within the framework of situation 2.}
\end{figure}

After a thorough calculation has been completed for situation 2, comparable calculations can be carried out for situation 3. Based on the analysis of all 25 cases in situations (2) and (3), we can easily conclude that:$$\|P_{\Phi(x)}(z) - P_{\Phi(y)}(z)\|\leq \frac{1}{2^6}\sqrt{(x^1-y^1)^2+(x^2-y^2)^2}. $$And it implies that condition (A4) is satisfied.\end{document}